\newcommand{\R}{\mathbb{R}}
\newcommand{\B}{\mathbb{B}}
\newcommand{\N}{\mathbb{N}}
\newcommand{\mc}[1]{\mathcal{#1}}
\newcommand{\bs}{\boldsymbol}
\newcommand{\bld}[1]{\boldsymbol{#1}}
\newcommand{\col}{\mathrm{col}}
\newtheorem{theorem}{Theorem}
\newtheorem{definition}{Definition}
\newtheorem{corollary}{Corollary}
\newacronym{MIP}{MIP}{Mixed-Integer Programming}
\newacronym{SoC}{SoC}{State of Charge}
\newacronym{PEV}{PEV}{Plug-in Electric Vehicle}
\newacronym{EV}{EV}{Electric Vehicle}
\newacronym{MLD}{MLD}{Mixed-Logical-Dynamical}
\newacronym{MPC}{MPC}{Model Predictive Control}
\newacronym{GS}{GS}{Gauss-Southwell}
\newacronym{GNEP}{GNEP}{Generalized Nash Equilibrium Problem}
\newacronym{MI-GPG}{MI-GPG}{Mixed-Integer Generalized Potential Game}
\newacronym{BSI}{BSI}{Bilateral Symmetric Interaction}
\newacronym{MINE}{$\varepsilon$-MINE}{$\varepsilon$-Mixed-Integer Nash Equilibrium}
\title{\LARGE \bf
Charging plug-in electric vehicles as a mixed-integer aggregative game
}
\author{Carlo Cenedese$^{\textrm{(a)}}$, Filippo Fabiani$^{\textrm{(b)}}$, Michele Cucuzzella$^{\textrm{(a)}}$, Jacquelien M. A. Scherpen$^{\textrm{(a)}}$,\\ Ming Cao$^{\textrm{(a)}}$ and Sergio Grammatico$^{\textrm{(b)}}$ 
\thanks{$^{\textrm{(a)}}$ Jan C. Wilems Center for Systems and Control, ENTEG, Faculty of Science and Engineering, University of Groningen, The Netherlands	({\texttt{\{c.cenedese, m.cucuzzella, j.m.a.scherpen, m.cao\}@rug.nl}}). 
$^{\textrm{(b)}}$ Delft Center for Systems and Control, TU Delft, The Netherlands
	({\texttt{\{f.fabiani, s.grammatico\}@tudelft.nl}})
	The work of Cenedese and Cao was supported by the Netherlands Organization for Scientific Research (NWO-vidi-14134), the one of Cucuzzella and Scherpen by the EU Project \lq MatchIT' (82203), and the one of Fabiani and Grammatico by NWO under project OMEGA (613.001.702) and P2P-TALES (647.003.003) and by the ERC under research project COSMOS (802348).}
}
\begin{document}

\maketitle
\thispagestyle{empty}
\pagestyle{empty}

\begin{abstract}

We consider the charge scheduling coordination of a fleet of plug-in electric vehicles, developing a hybrid decision-making framework for efficient and profitable usage of the distribution grid. Each charging dynamics, affected by the aggregate behavior of the whole fleet, is modelled as an inter-dependent, mixed-logical-dynamical system. The coordination problem is formalized as a generalized mixed-integer aggregative potential game, and solved via semi-decentralized implementation of a sequential best-response algorithm that leads to an approximated equilibrium of the game.
\end{abstract}

\section{Introduction}
\glspl{PEV} represent a promising alternative to the conventional fuel-based transportation \cite{Review_2016}. However, they require a ``smart'' charge/discharge coordination to prevent undesired electricity demand peaks and congestion in the low-voltage distribution grid~\cite{doi:10.1002/etep.565,5356176}. Indeed, 
a fleet of \glspl{PEV} can serve as a \emph{mobile extension} of the grid, mitigating the intermittent behavior of renewable energy sources by storing (or providing) energy when the generation is higher (or lower) than the load demand. These capabilities are known as `` grid-to-vehicle" (g2v) and ``vehicle-to-grid" (v2g), respectively \cite{6021358}.
In this context, a key role is played by the ``fleet manager'' or ``aggregator'', which is partially responsible for providing charging services to the \glspl{PEV}, coordinating the charge/discharge schedule to fulfill the needs of the \glspl{PEV} owners and (desirably) providing ancillary services to the distribution system operator \cite{Review_2016, doi:10.1002/etep.565}.

Accordingly, an attracting framework that suitably condenses both individual interests and intrinsic limitations related with the shared facilities is represented by noncooperative game theory whose equilibrium solution, for instance in \cite{J_Hiskens_2013}, allows to coordinate the charging of a population of PEVs in a decentralized fashion with the aim of filling the night-time valley.
A similar non-cooperative equilibrium problem is addressed in \cite{C_Parise_2014} using mean field game theory, while
in \cite{Gan_2013} the authors generalize the notion of valley-filling and study a social welfare optimization problem associated with the charging scheduling, proposing an asynchronous optimization algorithm. The \gls{PEV} charging coordination game is further developed in \cite{ma2016efficient} by including the battery degradation cost within the cost function of each vehicle. 
More generally, a semi-decentralized coordination algorithm based on fixed-point operator theory is proposed in \cite{Grammatico_2016}, which achieves global exponential convergence to an aggregative equilibrium, even for large population size, while a distributed approach based on the progressive second price auction mechanism has been recently proposed in \cite{Zou_2017}.

However, most of the models adopted in the literature do not describe  the intrinsic discrete operations of each PEV, e.g., being plugged-in or plugged-out from one of the available charging points, in g2v or in v2g mode~\cite{J_You_2016,J_Xing_2016,J_Rivera_2017}. 
The selfish nature of each \gls{PEV}, which pursues an economic, possibly profitable, usage of the charging station, together with the presence of both continuous (amount of energy charged/discharged) and discrete decision variables (operating mode) over a certain prediction horizon, motivates us to model the charge scheduling coordination of the \glspl{PEV} as a collection of inter-dependent mixed-integer optimization problems, where the coupling constraints arise in aggregative form (\S\ref{sec:PEV_scheduling}). In fact, the common electricity price, as well as the operational limitations of the charging station, e.g., the number of charging points, directly depend on the overall behavior of the fleet.
We then formulate the coordination problem as a mixed-integer aggregative game (\S \ref{sec:logic_to_mix_integer}), proving that it corresponds to a \gls{MI-GPG} \cite{facchinei2011decomposition}. Furthermore, we assume the presence of an aggregator, i.e., the charging station, that provides the aggregate information to the fleet and allows to compute an (approximated) equilibrium of the game by means of a semi-decentralized implementation of a sequential best-response algorithm (\S \ref{sec:gen_mixed_int_pot_game}). Finally, we define the game setup and the solution algorithm via numerical simulations on an illustrative scenario (\S \ref{sec:num_sim}).


\emph{Notation: }
$\B$ represents the binary set. 
For vectors $v_1,\dots,v_N\in\mathbb{R}^n$ and $\mc I=\{1,\dots,N \}$, the collective strategy is denoted  as $\boldsymbol{v}:=\mathrm{col}((v_i)_{i\in\mc I})=[v_1 ^\top,\dots ,v_N^\top ]^\top$ and $ \bld v_{-i}:=\col(( v_j )_{j\in\mc I\setminus \{i\}})=[v_1^\top,\dots,v_{i-1}^\top,v_{i+1}^\top,\dots,v_{N}^\top]^\top$. With a slight abuse of notation, we also use $\bld v = (v_i,\bld{v}_{-i})$.

\section{PEVs scheduling and charge as a system of mixed-logical-dynamical systems}
\label{sec:PEV_scheduling}
We start by  modeling the charge scheduling of a \gls{PEV} over a certain prediction horizon by first introducing the dynamics of the \gls{SoC} and the related control variables, both discrete and continuous. Then, we define the cost function that each \gls{PEV} aims to minimize and, by means of logical implications, the interactions arising both among the set of \glspl{PEV} itself and with the charging station.

\subsection{Decision variables and the SoC dynamics}
Let  $\mc I\coloneqq\{1,\dots,N\}$ be the set indexing a population of  \glspl{PEV} that share the charging resources on a single charging station with a finite number of charging points, over a prediction horizon $\mc T\coloneqq\{1,\dots,T\}$. 
%
%
For each vehicle $i\in\mc I$, the \gls{SoC} of its battery at time $t\in\mc T$ is denoted by $x_i(t)\in [0,1]$, where $x_i(t)=1$ represents a fully charged battery, while $x_i(t)=0$ a completely discharged one. 
The amount of {energy} exchanged between each vehicle and the charging station, during the time period $t$, is denoted by $u_i(t)\in \mc U\coloneqq[\underline{u},\overline{u}]$, $\underline{u}<0$, $\overline{u}>0$.
 Specifically, $u_i(t)$ takes positive values if vehicle~$i$ absorbs power during the time period $t$, while negative values if it gives energy back. 
For all $ t \in \mc{T}$, we assume the dynamics of the \gls{SoC} described by 
\begin{equation} \label{eq:SoC_update_u}
x_i(t+1) = x_i(t) + b_i \delta_{i}(t) u_i(t) - (1-\delta_{i}(t)) \mu_i(t),
\end{equation}  
where $b_i=\frac{\eta_i}{C_{i}}$ is a positive parameter which depends on the efficiency $\eta_i > 0$ of the $i$-th battery and the corresponding capacity $C_{i} > 0$. In \eqref{eq:SoC_update_u}, $\delta_{i}(t)\in \B$ is a scheduling variable of each vehicle, i.e., $\delta_{i}(t)=1$ if the $i$-th \gls{PEV} is connected to the charging station at time $t$, $0$ otherwise. Within the dynamics~\eqref{eq:SoC_update_u}, we consider that the \gls{SoC} of the battery decreases when the vehicle is actively driving. In fact, if $\delta_i(t)=0$, the \gls{SoC} decreases according to $\mu_i(t)>0$, which represents the discharge due to the energy consumed while driving during the time period $t\in\mc T$. 
Therefore, $\mu_i(t)>0$ (driving vehicle) can only affects the \gls{SoC} when the \gls{PEV} is not connected to the grid, i.e., $\delta_i(t)=0$. If the vehicle is not driving during the time period $t$, then $\mu_i(t)=0$.

Thus, over the whole horizon $\mc T$, each \gls{PEV} $i \in \mc{I}$ has decision variables $u_i \coloneqq \col( (u_{i}(t))_{t\in\mc{T}})$, $\delta_i \coloneqq \col((\delta_i(t))_{t\in\mc{T}})$ and controls the \gls{SoC}, $x_i \coloneqq \col((x_i(t))_{t\in\mc{T}})$.


\subsection{Cost function}
We assume that when the vehicle is connected to the charging station, it can either acquire energy from the charging station, or give it back. 
If the $i$-th \gls{PEV} at time $t\in\mc T$ requires power from the charging station, i.e., $u_i>0$, then it faces a cost associated to the purchase and to the battery degradation. We model the first term as $c(d(t)+\sum_{j\in\mc I\setminus\{i\}} u_j^+(t))u_i(t)$, where we assume a fair electricity market with the cost per energy unit $c > 0$, the same for all the \glspl{PEV}, and $u_j^+(t)\coloneqq u_j(t)$ if $u_j(t)\geq 0$, $0$ otherwise.  The values  $(d(t))_{t\in\N}$ represent the non-\gls{PEV} loads of the network, which is assumed to be a-priori known, while $\sum_{i\in\mc I\setminus\{i\}} u^+_i(t) \eqqcolon a_i(\bs{u}(t))$ is the \emph{aggregate} demand of energy associated with the set of \glspl{PEV} at time $t$. 
Then, we assume that the degradation cost associated to the charging of the battery is proportional to the variation of the energy exchanged between the \gls{PEV} and the charging station, i.e., $\rho^+_i(u_i(t)-u_i(t-1))^2$, with $\rho^+_i > 0$ depending on each single battery. Therefore, for all $t \in \mc{T}$ and for all $i \in \mc{I}$, the cost function associated to the charging phase (g2v) is
$$
\begin{aligned}
J^{\textup{g2v}}_{i}(u_i(t), \bs{u}_{-i}(t)) \coloneqq c(d(t)+ &a_i(\bs{u}(t))) u_i(t)\\ & +\rho^+_i(u_i(t)-u_i(t-1))^2.
\end{aligned}
$$ 
Similarly, when the \gls{PEV} gives energy to the charging station, i.e., $u_i(t)<0$, the vehicle-to-grid (v2g) interaction implies a degradation cost due to the battery discharge, i.e., $\rho^-_i(u_i(t)-u_i(t-1))^2$, $\rho^-_i > 0$. However, this cost is generally compensated by a reward  $r_i(t) u_i(t)$, for some given values $(r_i(t))_{t\in\N}$, that the \gls{PEV} receives from the charging station for actively participating in the scheduling.
 Thus, the cost function related with the v2g mode reads as
$$
J^{\textup{v2g}}_{i}(u_i(t)) \coloneqq  r_i(t)u_i(t) + \rho^-_i (u_i(t)-u_i(t-1))^2 ,
$$
for all $t \in \mc{T}$ and $i \in \mc{I}$.

Now, we introduce two binary decision variables, i.e., $\delta^{\textup{c}}_{i}$, $\delta^{\textup{d}}_{i}\in\B$, which allow to model the mutual actions of charging/discharging and guarantee that the local cost function is equal to $0$ when $u_i = 0$. They shall satisfy the next logical implications, for all $t \in \mc{T}$ and for all $i \in \mc{I}$:
\begin{align}
	&[\delta^{\textup{c}}_{i}(t) = 1] \iff [u_i(t)\geq 0],\label{eq:logical_implication_deltar}\\
	&[\delta^{\textup{d}}_{i}(t)=1] \iff [u_i(t) \leq 0].\label{eq:logical_implication_sigma}
\end{align}

{Namely,  $\delta^{\textup{c}}_{i}(t)$ (or, equivalently, $\delta^{\textup{d}}_{i}(t)$) is equal to $1$ if  the \gls{PEV} is not discharging (not charging), $0$ otherwise.} Finally, for all $i \in \mc{I}$, the complete cost function reads as follows
\begin{equation} \label{eq:cost_function}
\begin{aligned}
J_i(u_i, \delta^{\textup{d}}_{i}, \delta^{\textup{c}}_{i}, \bs{u}_{-i}) \coloneqq  \textstyle\sum_{t\in\mc T} &J^{\textup{g2v}}_{i}(u_i(t), \bs{u}_{-i}(t)) \left(1 - \delta^{\textup{d}}_{i}(t)\right)\\ &+ J^{\textup{v2g}}_{i}(u_i(t))(1-\delta^{\textup{c}}_{i}(t)),
\end{aligned}
\end{equation}
 where $\delta^{\textup{c}}_{i} \coloneqq \col((\delta^{\textup{c}}_{i}(t))_{t\in\mc{T}})$ and $\delta^{\textup{d}}_{i} \coloneqq \col((\delta^{\textup{d}}_{i}(t))_{t\in\mc{T}})$. 


\subsection{Local and global constraints} \label{subsec:constr_on_power_exchange} 
By considering a generic \gls{PEV} $i\in\mc I$,  first we assume that, for all $t \in \mc{T}$, it is desired that the corresponding level of \gls{SoC} is at least equal to a reference level  $x^{\textup{ref}}_i(t)\in[0,1]$, i.e.,
\begin{equation}\label{eq:constraint_desiredlevel}
x^{\textup{ref}}_i(t)\leq x_i(t)\,.
\end{equation}

The driving pattern of each \gls{PEV} limits the possible values of $\delta_i(t)$. Namely, if $\mu_i(t)>0$ then feasible set of $\delta_i(t)$ is $\mc B(t) = 0$; {otherwise} $\mc B(t)= \B$. Thus, this condition translate in a time varying feasible set $\mc B(t)$ of $\delta_i(t)$.

If $\delta_i(t)$ is left free, it may lead  a vehicle to persistently switch between being connected and unconnected to the grid among consecutive time intervals. This behaviour is not only unnatural, {but could also lead to a fast degradation of the battery} \cite{Millner:2010:Battery_degradation}. To exclude this potential source of damage, we impose that, if a \gls{PEV} is plugged-in at a certain time $t\in\mc T$, i.e., $\delta_{i}(t-1)=0$ and $\delta_{i}(t)=1$, then it has to remain connected to the charging station for at least $h_i\leq T, h_i \in \N$ consecutive time intervals. Thus, for all $i \in \mc{I}$ and $t\in\mc T$, this constraint can be rephrased via the logical implication
\begin{equation} \label{eq:logic_consecutiveintervals}
[\delta_{i}(t-1)=0] \wedge [\delta_{i}(t)=1] \Rightarrow [\delta_{i}(t+h) = 1, \, \forall h \leq h_i].
\end{equation} 
Next, to exclude that a \gls{PEV} charges or discharges when it is plugged-out (i.e., $\delta_i(t)=0$), we impose that $
	u_i(t)\in \mc U(t)\coloneqq[\underline{u} \delta_i(t),\overline{u}\delta_i(t)]
$. Thus, $u_i(t)=0$ if $\delta_i(t)=0$, otherwise $u_i$ takes values in $[\underline{ u}, \overline{u}]$.
Moreover, to exclude that a vehicle is plugged-in without being neither charged nor discharged, we impose that  $\delta_i(t)=0$ if $u_i(t)=0$. Thus, the next logical implication needs to be satisfied for all $t \in \mc{T}$ and $i \in \mc{I}$:
\begin{equation}\label{eq:unplug_force}
	[u_i(t) = 0] \iff [\delta_i(t) = 0].
\end{equation}
Successively, due to the intrinsic limitations of the grid capacity $\overline{d}>0$, we assume that the amount of energy required in a single time period by both the \glspl{PEV} and non-\gls{PEV} loads cannot be greater than $\overline{d}$. 
This translates into a constraint on the \glspl{PEV} total demand, i.e.,
\begin{equation}\label{eq:constraint_totalamount}
	d(t) + \textstyle\sum_{j\in\mc{I}} u_j(t) \in \left[0, \overline{d}\right].
\end{equation}



Finally, it is natural to assume that the charging station has a finite number of charging spots, i.e., the number of vehicles that can charge/discharge at the same time cannot exceed a finite value $\overline{v} \in\N$. Thus, we impose, for all $t \in \mc{T}$,
\begin{equation}\label{eq:constraint_charging}
	\textstyle\sum_{j\in\mc{I}} \delta_{j}(t) \leq \overline{v}\:.
\end{equation}


\subsection{Mixed-integer game formulation}
We conclude this section by summarizing a preliminary formulation of the optimization problem for the scheduling and charging/discharging of each \gls{PEV} as follows:

\begin{equation}\label{eq:MPC_first}
\forall i \in \mc{I} : \left\{\begin{aligned}
&\underset{u_i, x_i, \delta_i, \delta^{\textup{d}}_{i}, \delta^{\textup{c}}_{i}}{\textrm{min}} & & J_i(u_i, \delta^{\textup{d}}_{i}, \delta^{\textup{c}}_{i}, \bs{u}_{-i})\\
&\hspace{.65cm}\textrm{s.t.} & & \eqref{eq:SoC_update_u}, x_i(t) \in [0, 1],\\
&&& u_i(t) \in \mc{U}(t), \delta_i(t)\in\mc B(t),\\
&&&\delta^{\textup{c}}_{i}(t), \delta^{\textup{d}}_{i}(t) \in \mathbb{B},\\ 
&&& \eqref{eq:logical_implication_deltar}, \eqref{eq:logical_implication_sigma},\,\text{\eqref{eq:constraint_desiredlevel}--\eqref{eq:constraint_charging}}, \; \forall t \in \mc{T}.
\end{aligned}\right.\tag{$\mc{G}$}
\end{equation} 

We emphasize that several constraints in \eqref{eq:MPC_first}, as well as the cost function, are either expressed as logical implications, or directly depend on the evaluation of a logical proposition. In the next section we translate all these logical implications into mixed-integer-linear  inequalities, hence the problem in \eqref{eq:MPC_first} as a (parametric) mixed integer quadratic problem.

%
%
%

\section{Translating the logical implications into mixed-integer linear constraints}\label{sec:MIL}
\label{sec:logic_to_mix_integer}
%

For the sake of clarity, we adopt the same notation used in \cite{fabiani2018mld,fabiani2018mvad}, i.e.,  we define several patterns of inequalities that allow to handle all the constraints. Specifically, given a linear function $f:\R \rightarrow \R$, let us define $M \coloneqq \textrm{max}_{x \in \mc{X}} f(x)$, $m \coloneqq \textrm{min}_{x \in \mc{X}} f(x)$ with $\mc{X}$ being a  compact set. Then, with $c \in \R$ and $\delta \in \mathbb{B}$, a first system $\mc{S}_{\geq}$ of mixed-integer inequalities correspond to $[\delta = 1] \iff [f(x) \geq c]$, i.e.,
\begin{equation*}
\mc{S}_{\geq}(\delta, f(x), c) \coloneqq \left\{
\begin{aligned}
&(c - m)\delta \leq f(x) - m\\
&(M - c + \epsilon) \delta \geq f(x) - c + \epsilon,
\end{aligned}
\right.
\end{equation*}
Here $\epsilon > 0$ is a small tolerance beyond which the constraint is regarded as violated. With the same idea, the following set of inequalities corresponds to $[\delta = 1] \iff [f(x) \leq c]$:
\begin{equation*}
\mc{S}_{\leq}(\delta, f(x), c) \coloneqq \left\{
\begin{aligned}
&(M - c) \delta \leq M - f(x)\\
&(c + \epsilon - m) \delta \geq \epsilon + c - f(x).
\end{aligned}
\right.
\end{equation*}

Successively, we define the next block of inequalities, involving binary variables only, which allow to solve propositions with logical AND, i.e., $[\delta = 1] \iff [\sigma = 1] \wedge [\gamma = 1]$:
\begin{equation*}
\mc{S}_{\wedge}(\delta, \sigma, \gamma) \coloneqq \left\{
\begin{aligned}
- &\sigma + \delta \leq 0\\
- &\gamma + \delta \leq 0\\
&\sigma + \gamma - \delta \leq 1,
\end{aligned}
\right.
\end{equation*}

Finally, to recast bilinear terms, i.e., products of binary and continuous variables, into a mixed-integer linear formulation, we introduce the following pattern of inequalities.
\begin{equation*}
\mc{S}_{\Rightarrow}(g, f(x), \delta) \coloneqq \left\{
\begin{aligned}
& m \delta \leq g \leq M \delta\\
& - M(1-\delta) \leq	g - f(x) \leq - m(1 - \delta)\\
\end{aligned}\right.
\end{equation*}
The latter is equivalent to: $[\delta = 0] \implies [g = 0]$, while $[\delta = 1] \implies [g = f(x)]$.

\subsection{The mixed-integer-linear constraints}
The logical implications in \eqref{eq:logical_implication_deltar} and \eqref{eq:logical_implication_sigma} translate into:
\begin{align}
&\mc{S}_{\geq}(\delta^{\textup{c}}_{i}(t), u_i(t), 0),\label{eq:first_MI}\\
&\mc{S}_{\leq}(\delta^{\textup{d}}_{i}(t), u_i(t), 0). \label{eq:sigma_u}
\end{align}

By referring to the \gls{SoC} dynamics in \eqref{eq:SoC_update_u}, the bilinear term $u_i \delta_i$ can be handled by following the procedure in \cite{bemporad1999control}, which allows to rewrite it as mixed-integer-linear inequalities by means of additional auxiliary variables (both real and binary, \cite{williams2013model}). Specifically, we define the auxiliary variable $f_{i} \coloneqq u_i \delta_i \in \R$ that shall satisfy the pattern of inequalities:
\begin{equation}\label{eq:second_MI}
\mc{S}_{\Rightarrow}(f_i(t), u_i(t), \delta_i(t)).
\end{equation}
Thus,  the \gls{SoC} dynamics at the generic time $t \in \mc{T}$ reads as
\begin{align}\label{eq:new_SOC}
x_i(t+1) = x_i(t) + b_i f_i(t) - (1-\delta_i(t)) \mu_i(t),
\end{align}

The same approach allows to manage the bilinear terms 
that appear in the cost function \eqref{eq:cost_function}.
Hence, by defining $g_{i}(t) \coloneqq u_i(t)(1- \delta^{\textup{c}}_{i}(t)) \in \R$, $s_{i}(t) \coloneqq u_i(t-1)(1- \delta^{\textup{c}}_{i}(t)) \in \R$, $\ell_{i}(t) \coloneqq u_i(t)(1- \delta^{\textup{d}}_{i}(t)) \in \R$ and $\kappa_{i}(t) \coloneqq u_i(t-1)(1- \delta^{\textup{d}}_{i}(t)) \in \R$, subjected to inequalities, for all $i \in \mc{I}$ and $t \in \mc{T}$,
\begin{align}
	&\mc{S}_{\Rightarrow}(g_i(t), u_i(t), 1-\delta^{\textup{c}}_{i}(t)),\\
	&\mc{S}_{\Rightarrow}(s_i(t), u_i(t-1), 1-\delta^{\textup{c}}_{i}(t)),\\
	&\mc{S}_{\Rightarrow}(\ell_i(t), u_i(t), 1-\delta^{\textup{d}}_{i}(t)),\\
	&\mc{S}_{\Rightarrow}(\kappa_i(t), u_i(t-1), 1-\delta^{\textup{d}}_{i}(t)) \label{eq:third_MI},
\end{align} 
the cost function \eqref{eq:cost_function} reads as (omitting the dependencies),
\begin{align}\label{eq:new_cost_function}
	J_i = c &(d(t)+a_i(\bs{u}(t))) \ell_i(t) + r_i(t) g_i(t) \nonumber\\  
	& + \rho_i^{-} \left(g_i(t) -s_i(t)\right)^2 + \rho_i^{+} (\ell_i(t) - \kappa_i(t) )^2 \:. 
\end{align}
Notice that via these variables we can rewrite also $a(\bs{u}(t))=\sum_{j\in\mc I\setminus\{i\}} \ell_j(t)$.
Let us consider proposition \eqref{eq:unplug_force}, equivalently rewritten as
$
[u_i(t) \leq 0] \wedge [u_i(t) \geq 0] \implies [(1-\delta_i(t)) = 1].
$
Since $\delta^{\textup{c}}_{i}$ and $\delta^{\textup{d}}_{i}$ already satisfy the inequalities in \eqref{eq:first_MI} and \eqref{eq:sigma_u}, for all $t \in \mc{T}$, the constraint in \eqref{eq:unplug_force} reads as
\begin{equation}\label{eq:bau}
	\mc{S}_{\wedge}(1-\delta_i(t), \delta^{\textup{c}}_{i}(t), \delta^{\textup{d}}_{i}(t)).
\end{equation}

Next, by referring to \eqref{eq:logic_consecutiveintervals}, we introduce a binary auxiliary variable $\alpha_i$, which is equal to $1$ when both $\delta_i(t-1)=0$ and $\delta_i(t)=1$, $0$ otherwise. Hence, $\alpha_i$ shall satisfy, for all $t \in \mc{T}$,
$
	[\alpha_i(t) = 1] \iff [(1-\delta_i(t-1))=1] \wedge [\delta_i(t)=1],
$
which translates into the pattern of integer linear inequalities
\begin{equation}\label{eq:fourth_MI}
	\mc{S}_{\wedge}(\alpha_i(t), 1-\delta_i(t-1), \delta_i(t)).
\end{equation}
Thus, \eqref{eq:logic_consecutiveintervals} can be rewritten as a nonlinear equality constraint
$
\alpha_i(t) \left(\sum_{h =1}^{h_i} \delta_i(t+h) - h_i \right) = 0.
$
Successively, we introduce a set of variables $\beta^{(h)}_i \in \B$ such that, for all $t \in \mc{T}$,
$
[\beta^{(h)}_i(t) = 1] \iff [\alpha_i(t)=1] \wedge [\delta_i(t+h)=1, \, \forall h \leq h_i],
$
which corresponds to the following patterns of inequalities
\begin{equation}\label{eq:fifth_MI}
\mc{S}_{\wedge}(\beta^{(h)}_i(t), \alpha_i(t), \delta_i(t+h)),  \; \forall h \leq h_i.
\end{equation}
This allows to rewrite the equality constraint in linear form
\begin{equation}\label{eq:sixth_MI}
	\textstyle{\sum_{h = 1}^{h_i} }\beta^{(h)}_i(t) - \alpha_i(t) h_i = 0.
\end{equation}

\subsection{Final mixed-integer-linear model}
By rearranging all the inequalities, we obtain the following mixed-integer aggregative game 
\begin{equation}\label{eq:MPC_i_complete}
\forall i \in \mc{I} : \left\{\begin{aligned}
&\underset{u_i, x_i, \ldots,\beta_i}{\textrm{min}} & & J_i(u_i, g_i, s_i, \ell_i, \kappa_i, \bs{u}_{-i})\\
&\hspace{.5cm}\textrm{s.t.} & &  \eqref{eq:constraint_desiredlevel},  x_i(t) \in [0, 1],\\
&&& \text{\eqref{eq:constraint_totalamount}--\eqref{eq:third_MI}}, \text{\eqref{eq:bau}--\eqref{eq:sixth_MI}},\\
&&& u_i(t), f_i(t), g_i(t), s_i(t) \in \mc{U}(t),\\
&&& \ell_i(t), \kappa_i(t) \in \mc{U}(t),\;\delta_i(t)\in\mc B(t),\\ 
&&&\delta^{\textup{c}}_{i}(t), \alpha_i(t), \delta^{\textup{d}}_{i}(t) \in \mathbb{B}, \; \forall t \in \mc{T},\\
&&& \beta^{(h)}_i(t) \in \mathbb{B}, \;\forall h \leq h_i,  \forall t \in \mc{T}.
\end{aligned}\right.
\end{equation}

The coupling constraints in \eqref{eq:constraint_totalamount}--\eqref{eq:constraint_charging}, as well as the cost function, depend on the aggregate behavior of the fleet of \glspl{PEV}. By defining $z_i \coloneqq \col(u_i,x_i,\ldots,\beta_i) \in \R^{n_i}$ and $\bs{z} \coloneqq \col((z_i)_{i \in \mc{I}}) \in \R^{n}$, $n \coloneqq \sum_{i \in \mc{I}} n_i$, as the vectors stacking local and collective mixed-integer variables, respectively, we have, for some suitable matrix $A$ and vector $b$,
\begin{equation}\label{eq:MPC_i_compact}
\forall i \in \mc{I} : 
\underset{z_i}{\textrm{min}} \; J_i(z_i, \bs{z}_{-i}) \;\; \textrm{s.t. } \; A \bs{z} \leq b.
\end{equation}

\section{PEVs charge coordination as a generalized mixed-integer potential game}
\label{sec:gen_mixed_int_pot_game}
Our aim is now to design suitable sequences of decisions that guarantee to each \gls{PEV} an effective, economic, possibly profitable, usage of the electrical distribution network, while satisfying both the intrinsic limitations of the charging station itself, and by the presence of the other vehicles. 
We propose to achieve such a trade-off by formalizing the charge coordination of the set of \glspl{PEV} as a \gls{MI-GPG} \cite{facchinei2011decomposition}, an instance of the \glspl{GNEP} \cite{facchinei2007generalized}.

\subsection{Potential game setup}
First, we identify the player set with $\mc{I}$, and we define the feasible set of each player, i.e., $\mc{Z}_{i}(\bs{z}_{-i}) \coloneqq \{ z_i \in \R^{n_i} \mid A (z_i, \bs{z}_{-i}) \leq b \}$, and $\bs{\mc{Z}} \coloneqq \{\bs{z} \in \R^n \mid A \bs{z} \leq b\}$. Next, we introduce the mixed-integer best response mapping for player $i$, given the strategies of the other players $\bs{z}_{-i}$:
\begin{equation}\label{eq:bestresponsemapping}
z^{\star}_{i}(\bs{z}_{-i}) \coloneqq 
\underset{z_i}{\textrm{argmin}} \; J_i(z_i, \bs{z}_{-i}) \;\; \textrm{s.t. } \; (z_i, \bs{z}_{-i}) \in \bs{\mc{Z}}.
\end{equation}

Hence, in the proposed game $\Gamma \coloneqq \left(\mc{I}, \{J_i\}_{i \in \mc{I}}, \{\mc{Z}_i\}_{i \in \mc{I}} \right)$, we are interested in the following notion of equilibrium.
\begin{definition}[$\varepsilon$-Mixed-Integer Nash equilibrium]\label{def:MINE}
	Let $\varepsilon > 0$. $\bs{z}^{*} \in \bs{\mc{Z}}$ is an \gls{MINE} of the game $\Gamma$ in \eqref{eq:bestresponsemapping} if, for all $i \in \mc{I}$,
	\begin{equation*}
	J_i(z^{*}_i, \bs{z}^{*}_{-i}) \leq \underset{z_i \in \mc{Z}_i(\bs{z}^{*}_{-i})}{\textrm{inf}} J_i(z_i, \bs{z}^{*}_{-i}) + \varepsilon.
	\end{equation*}
	
	\vspace{-.88cm}
	\hfill$\square$
	\vspace{.15cm}
\end{definition}

We consider here potential games characterized by the existence of an exact potential function, defined next.

\begin{definition}
	A continuous function $P:\R^n \to \R$ is an exact potential function for the game $\Gamma$ in \eqref{eq:bestresponsemapping} if, for all $i \in \mc{I}$, for all $\bs{z}_{-i}$, and for all  $z_i$, $y_i \in \mc{Z}_i(\bs{z}_{-i})$,
	\begin{equation*}
	P(z_i,\bs{z}_{-i}) - P(y_i,\bs{z}_{-i}) = J_i(z_i,\bs{z}_{-i}) - J_i(y_i,\bs{z}_{-i}). \quad\square
	\end{equation*}
%
\end{definition}

Now, let us consider the cost function in \eqref{eq:new_cost_function} defined for all $t \in \mc{T}$. Due to the particular structure of the cost of the acquired energy, in compact form it reads as
\begin{equation}\label{eq:cost_fun_compact}
J_i(z_i, \bs{z}_{-i}) = \phi_i(z_i) + \textstyle\sum_{j\in\mc{I}\setminus\{i\}} \omega_{ij} (z_i, z_{j}),
\end{equation}
where $\phi_i(z_i) \coloneqq z_i^\top Q_i z_i + q_i^\top z_i$, for some suitable positive semi-definite matrix $Q_i \in \R^{n_i \times n_i}$ and vector $q_i \in \R^{n_i }$, while 
$$
	\omega_{ij} (z_i, z_{j}) \coloneqq z_j^\top \left[\begin{smallmatrix}
	 \bs{0} & \bs{0} & \bs{0}\\
	 \bs{0} & c \, I & \bs{0}\\
	 \bs{0} & \bs{0} & \bs{0}
	\end{smallmatrix}\right]  z_i = c \,\ell_j^\top \ell_i.
$$
\begin{theorem}\label{th:pot_game}
	The game $\Gamma$ in \eqref{eq:bestresponsemapping} is an \gls{MI-GPG} with exact potential function
	\begin{equation}\label{eq:pot_game}
		P(\bs{z}) \coloneqq \textstyle\sum_{i \in \mc{I}}\left(\phi_i(z_i) + \textstyle\sum\limits_{j \in \mc{I}, j < i} \omega_{ij} (z_i, z_j)\right).
	\end{equation}
	\hfill$\square$
\end{theorem}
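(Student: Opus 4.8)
The plan is to verify directly the defining identity of an exact potential function: for every player $i \in \mc{I}$, every $\bs{z}_{-i}$, and every pair $z_i, y_i \in \mc{Z}_i(\bs{z}_{-i})$, one must show
\begin{equation*}
P(z_i, \bs{z}_{-i}) - P(y_i, \bs{z}_{-i}) = J_i(z_i, \bs{z}_{-i}) - J_i(y_i, \bs{z}_{-i}).
\end{equation*}
Since this is a purely algebraic identity in the decision variables, the coupled (generalized) nature of the constraint set $\bs{\mc{Z}}$ plays no role in the verification: it suffices to establish the equality as functions of $z_i$ with $\bs{z}_{-i}$ held fixed, and the feasibility $z_i, y_i \in \mc{Z}_i(\bs{z}_{-i})$ enters only to restrict where the identity is eventually used. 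The game being a generalized mixed-integer potential game then follows by definition, once the exact potential is exhibited.

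First I would isolate, inside $P(\bs{z})$, all the terms that depend on $z_i$. The quadratic--linear part contributes the single summand $\phi_i(z_i)$. The interaction terms $\omega_{kj}(z_k,z_j)$ with $j < k$ that involve $z_i$ split into two groups: those with $k = i$ and $j < i$, contributing $\sum_{j<i}\omega_{ij}(z_i,z_j)$; and those with $k > i$ and $j = i$, contributing $\sum_{k>i}\omega_{ki}(z_k,z_i)$. Every remaining term of $P$ is independent of $z_i$ and therefore cancels in the difference $P(z_i,\bs{z}_{-i}) - P(y_i,\bs{z}_{-i})$.

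The crucial step is the symmetry of the interaction term. From the explicit form $\omega_{ij}(z_i,z_j) = c\,\ell_j^\top \ell_i$ one reads off $\omega_{ki}(z_k,z_i) = c\,\ell_i^\top\ell_k = c\,\ell_k^\top\ell_i = \omega_{ik}(z_i,z_k)$, i.e. the bilateral symmetric interaction property $\omega_{ij} = \omega_{ji}$. Applying this to the second group turns $\sum_{k>i}\omega_{ki}(z_k,z_i)$ into $\sum_{k>i}\omega_{ik}(z_i,z_k)$, so that the full $z_i$-dependent part of $P$ becomes
\begin{equation*}
\phi_i(z_i) + \sum_{j<i}\omega_{ij}(z_i,z_j) + \sum_{j>i}\omega_{ij}(z_i,z_j) = \phi_i(z_i) + \sum_{j\in\mc{I}\setminus\{i\}}\omega_{ij}(z_i,z_j) = J_i(z_i,\bs{z}_{-i}),
\end{equation*}
where the last equality is exactly the compact form \eqref{eq:cost_fun_compact} of the cost. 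Evaluating at $z_i$ and at $y_i$ and subtracting, the $z_i$-independent remainder of $P$ cancels, leaving precisely $J_i(z_i,\bs{z}_{-i}) - J_i(y_i,\bs{z}_{-i})$, as required.

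The only genuine obstacle is the bookkeeping in the second paragraph: correctly recognizing that each unordered pair $\{i,j\}$ contributes to $P$ through a single ordered term, namely the one indexed by the larger of the two players, and then invoking the symmetry $\omega_{ij}=\omega_{ji}$ to realign the $k>i$ contributions with the summation defining $J_i$. Once this symmetry is in hand, the rest is routine cancellation; notably, no convexity, feasibility, or integrality argument is needed, since the identity holds verbatim over the entire mixed-integer domain, and in particular on $\mc{Z}_i(\bs{z}_{-i})$.
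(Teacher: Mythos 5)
Your proof is correct, and it is essentially the argument the paper intends: the paper's own ``proof'' is a one-line pointer to \cite[Prop.~2]{fabiani2019nash}, and what you have written out --- isolating the $z_i$-dependent terms of $P$, noting that each unordered pair $\{i,j\}$ appears once in $P$ via the term indexed by the larger player, and using the symmetry $\omega_{ij}(z_i,z_j)=c\,\ell_j^\top\ell_i=\omega_{ji}(z_j,z_i)$ to recover $J_i$ as in \eqref{eq:cost_fun_compact} --- is precisely the standard bilateral-symmetric-interaction verification that the cited proposition carries out. Your observation that feasibility and integrality play no role in the algebraic identity, entering only through where the definition is invoked, is also accurate.
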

\begin{proof}
	The proof replicates the one in \cite[Prop.~2]{fabiani2019nash}.
\end{proof}

To conclude, we recall that the set of $\varepsilon$-approximated minimum over $\bs{\mc{Z}}$ of $P$ corresponds to a subset of the \gls{MINE} of the game \cite[Th.~2]{sagratella2017algorithms}.

\subsection{Semi-decentralized solution algorithm}
\setlength{\algomargin}{.5em}
\begin{algorithm}[!t]
	\caption{Sequential best-response}\label{alg:Alg_GS}
	\DontPrintSemicolon
	\SetArgSty{}
	\SetKwIF{If}{ElseIf}{Else}{if}{}{else if}{else}{end if}
	\SetKwFor{ForAll}{for all}{do}{end forall}
	\SetKwRepeat{Do}{do}{end}
	\textbf{Initialization: }Choose $\bs{z}(0) \in \bs{\mc{Z}}$, set $k \coloneqq 0$\;
	\While{$\bs{z}(k)$  is not an \gls{MINE}}{
		$\mc{AG}$ \Do{}{
			Chooses $i \coloneqq i(k) \in \mathcal{I}$\\
			Sets $p_i(\bs{u}(k))$ as in \eqref{eq:price_of_energy}, $e(\bs{\delta}(k))$ as in \eqref{eq:free_plugs}\\ 
			Sends $p_i(\bs{u}(k))$, $e(\bs{\delta}(k))$ to $i$
			
		}
		Player $i$ \Do{}{
			Compute $z^{\ast}_{i}(k) \in z^{\star}_{i}(k)$ as in \eqref{eq:bestresponsemapping}\\
			 \uIf{$J_{i}(z_{i}(k), p_i(\bs{u}(k))) - J_{i}(z_{i}^{\ast}(k), p_i(\bs{u}(k))) \geq \varepsilon$}{ \smallskip $z_{i}(k+1) \coloneqq z^{\ast}_i(k)$ \smallskip}		
			 \Else{ $z_{i}(k+1) \coloneqq z_i(k)$ \smallskip}
		}
		$\mc{AG}$ collects $z_{i}(k+1)$\\
		Set $z_j(k+1) \coloneqq z_j(k) \; \forall j\neq i$, $k \coloneqq k+1$
}
\end{algorithm}

\begin{figure}[b]
	\centering
	\includegraphics[trim=120 90 150 20,clip,width=0.74\columnwidth]{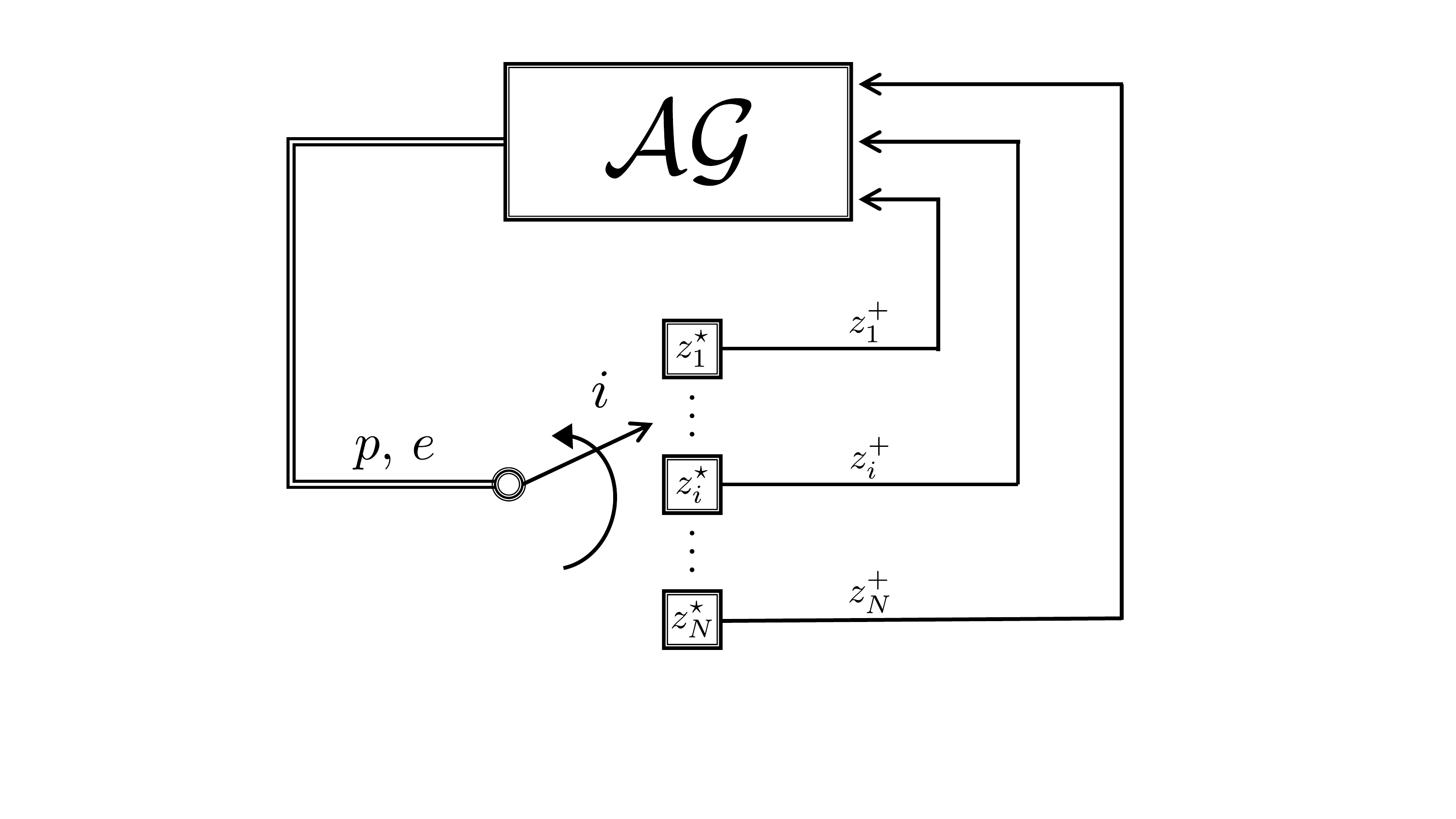}
	\caption{Semi-decentralized implementation of Algorithm~\ref{alg:Alg_GS}, where $z_i^+\coloneqq z_i(k+1)$.}
	\label{fig:agg_scheme}
\end{figure}

To compute a solution to \eqref{eq:MPC_i_compact}, each player requires the aggregate information over the whole horizon $\mc{T}$, i.e., the \glspl{PEV} demand of energy $a_i(\bs{u}) \coloneqq \col((a_i(\bs{u}(t)))_{t\in\mc{T}})$, or, equivalently, the price associated to such demand
\begin{equation}\label{eq:price_of_energy}
	p_i(\bs{u}) \coloneqq c (d + a_i(\bs{u})),
\end{equation}
with $d \coloneqq \col((d(t))_{t \in \mc{T}})$, and the number of available charging points at the charging station, i.e., 
\begin{equation}\label{eq:free_plugs}
	e(\bs{\delta}) \coloneqq \overline{v}\bs{1}_{T} -  \left( \bs{1}_{N}^\top \otimes I_T \right) \bs{\delta},
\end{equation}
where $\bs{\delta} \coloneqq \col((\delta_{i})_{i \in \mc{I}})$. Specifically, while the number of available charging points $e(\bs{\delta})$ reflects the coupling constraint in \eqref{eq:constraint_charging}, the price of energy $p_i(\bs{u})$ affects both the constraints and the cost function in \eqref{eq:cost_fun_compact}, 
which can be equivalently expressed as function of $z_i$ and $p_i(\bs{u})$.

In this context, it seems unrealistic or, at least, impractical to assume full communication among all the potential users of the same charging station, over the same horizon $\mc{T}$. Therefore, to compute an \gls{MINE}, we propose a semi-decentralized implementation of the sequential best-response method, summarized in Algorithm~\ref{alg:Alg_GS}, that includes an aggregator $\mc{AG}$, whose role is played by the charging station, within the communication pattern. 
According to \cite{Review_2016}, we assume the  driving patterns of each \gls{PEV} to be known  by the $\mc{AG}$.
As shown in the scheme in Fig.~\ref{fig:agg_scheme},  at each iteration $k$, the aggregator $\mc{AG}$ chooses the next agent taking part in the game, communicating the price of energy and the number of available charging points. With this information, the selected agent computes a best response and decides to change strategy only if it leads to an (at least) $\varepsilon$-improvement in terms of minimization of its cost function. Finally, $\mc{AG}$ collects the decision of such agent, while the remaining part of players keeps its strategy unchanged.
%

\begin{corollary}
	Let $\varepsilon > 0$ and assume that, in Algorithm~\ref{alg:Alg_GS}, there exists $K > 0$ such that $j \in \{i(k), i(k+1),\ldots,i(k+K)\}$ for all $j \in \mc{I}$ and $k \geq 0$. Algorithm~\ref{alg:Alg_GS} computes an \gls{MINE}, $\bs{z}^\ast \in \bs{\mc{Z}}$, of the game $\Gamma$ in \eqref{eq:bestresponsemapping}.
	\hfill$\square$
\end{corollary}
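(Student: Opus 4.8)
The plan is to use the exact potential function $P$ from \eqref{eq:pot_game}, whose existence is guaranteed by Theorem~\ref{th:pot_game}, as a monotone certificate along the iterates of Algorithm~\ref{alg:Alg_GS}, and then to combine its monotone decrease with the fairness (essentially cyclic) assumption on the selection rule $k \mapsto i(k)$ to show that the stabilized profile satisfies, for every player, the defining inequality of an \gls{MINE} in Definition~\ref{def:MINE}.

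First I would examine a single iteration $k$ in which the selected player $i = i(k)$ actually updates, i.e.\ the test $J_i(z_i(k), p_i(\bs u(k))) - J_i(z_i^\ast(k), p_i(\bs u(k))) \geq \varepsilon$ is passed, where $z_i^\ast(k)$ is the best response in \eqref{eq:bestresponsemapping}. Since only the block $z_i$ is modified while $\bs z_{-i}(k)$ is held fixed, the exact-potential property gives $P(\bs z(k+1)) - P(\bs z(k)) = J_i(z_i^\ast(k), \bs z_{-i}(k)) - J_i(z_i(k), \bs z_{-i}(k)) \leq -\varepsilon$; if the test fails, the profile is unchanged and $P(\bs z(k+1)) = P(\bs z(k))$. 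Hence $P(\bs z(k))$ is non-increasing and drops by at least $\varepsilon$ at every effective update.

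Next I would invoke compactness of $\bs{\mc Z}$: the continuous variables lie in box constraints (such as $\mc U(t)$ and $[0,1]$) and the binary variables in finite sets, so the continuous function $P$ is bounded below on $\bs{\mc Z}$. Consequently the number of effective updates cannot exceed $(P(\bs z(0)) - \min_{\bs{\mc Z}} P)/\varepsilon$, which is finite, so there is a finite index $\bar k$ after which no player updates, and the profile stabilizes at $\bs z^\ast := \bs z(\bar k) \in \bs{\mc Z}$. The fairness assumption then closes the argument: within the window $\{\bar k, \ldots, \bar k + K\}$ each $j \in \mc I$ is selected at some iteration $k_j$ at the constant profile $\bs z^\ast$ and fails the test, i.e.\ $J_j(z_j(k_j), \bs z_{-j}^\ast) - J_j(z_j^\ast(k_j), \bs z_{-j}^\ast) < \varepsilon$. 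Since $z_j^\ast(k_j)$ attains $\inf_{z_j \in \mc Z_j(\bs z_{-j}^\ast)} J_j(z_j, \bs z_{-j}^\ast)$ by \eqref{eq:bestresponsemapping}, this is exactly the inequality of Definition~\ref{def:MINE} for $j$; as it holds for all $j$, the profile $\bs z^\ast$ is an \gls{MINE}.

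The step I expect to be the main obstacle is the per-update potential decrease, because it crucially relies on $P$ being an \emph{exact} (not merely ordinal) potential: a unilateral deviation of player $i$ must change $P$ by the same amount as it changes $J_i$, which is what converts a single player's local $\varepsilon$-improvement into a global $\varepsilon$-decrease of the certificate and, together with boundedness of $P$, forces finite termination. A secondary care point is that the best-response map be computed exactly, so that failure of the $\varepsilon$-test genuinely certifies the infimum condition of the \gls{MINE} rather than only approximate stationarity of player $j$.
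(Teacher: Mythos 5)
Your proof is correct, and it rests on exactly the same pillar as the paper's: Theorem~\ref{th:pot_game}, which equips $\Gamma$ with the exact potential $P$ in \eqref{eq:pot_game}. The difference is one of packaging. The paper disposes of the corollary in two lines by invoking \cite[Th.~4]{sagratella2017algorithms}, whereas you reprove that cited convergence theorem from scratch; your unpacking is the standard argument and is sound. Concretely: (i) exactness of $P$ converts each \emph{accepted} $\varepsilon$-improvement of a single player into a decrease of $P$ by at least $\varepsilon$, while rejected steps leave the profile, hence $P$, unchanged; (ii) $P$ is continuous and the feasible set is bounded (boxes for the continuous variables, finite sets for the binaries), so $P$ is bounded below on $\bs{\mc{Z}}$, only finitely many accepted steps can occur, and the iterates stabilize at some $\bs{z}^\ast \in \bs{\mc{Z}}$; (iii) the fairness assumption guarantees that every player is polled at $\bs{z}^\ast$ within a window of length $K$ and fails the acceptance test, which is precisely the inequality of Definition~\ref{def:MINE}. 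What your self-contained version buys is transparency about where each hypothesis enters ($\varepsilon>0$ for finite termination, exactness rather than ordinality of $P$, fairness of the selection rule); what the paper's citation buys is brevity. Two small points worth making explicit if you write this up: the best response in \eqref{eq:bestresponsemapping} must be \emph{attained} so that failing the test certifies the infimum condition --- it is, since each player's problem is a mixed-integer quadratic program over a nonempty compact set, so the infimum in Definition~\ref{def:MINE} is a minimum; and the acceptance test in Algorithm~\ref{alg:Alg_GS} is phrased in terms of $p_i(\bs{u}(k))$ rather than $\bs{z}_{-i}(k)$, but the paper notes that $J_i$ can be equivalently expressed as a function of $z_i$ and $p_i(\bs{u})$, so your identification of the test with a unilateral cost comparison at fixed $\bs{z}_{-i}(k)$ is legitimate.
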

\begin{proof}
	In view of Theorem~\ref{th:pot_game}, the game $\Gamma$ in \eqref{eq:bestresponsemapping} is an \gls{MI-GPG}. Therefore, by \cite[Th. 4]{sagratella2017algorithms}, the sequential best-response based Algorithm~\ref{alg:Alg_GS} converges to an \gls{MINE}.
\end{proof}

\section{Numerical simulations}
\label{sec:num_sim}
In this section, we show numerical results obtained by solving the \gls{MI-GPG} in \eqref{eq:bestresponsemapping} with parameter values specified in Tab.~\ref{tab:sim_val} (see \cite{J_Hiskens_2013,J_Rivera_2017, Melle}) and reward function $r(t)$ chosen proportional to the nominal demand and the energy generated by the vehicles, i.e., $r(t)\coloneqq \overline{r}(\sum_{j\in\mc I\setminus\{i\}} u^-_j(t)+d(t))$, where $d(t)$ corresponds to a typical daily demand curve \cite{J_Rivera_2017}, and $u^-_j(t)$ defined analogously to  $u^+_j(t)$.
\begin{table}[tb]
\caption{Simulation parameters}
\label{tab:sim_val}
\begin{center}
\begin{tabular}{ |p{0.75cm}p{1cm}| p{3.5cm}p{1.8cm}|  }
 \hline
  Name & [$\:$]&  Description & Values \\
 \hline
 $N$ &  &  \glspl{PEV} number   &$6$  \\
 $T$ & &  Time intervals in $24h$ & $48$ \\
 $u$ &[kWh] & Energy exchanged per interval & $[-7.5,7.5]$ \\
 $\eta$& & Battery efficiency & $0.85$ \\
 $C_i$ &[kWh] & Battery capacity  & $[40, 75]$\\
 $x_0$ && Initial SoC of battery & $0.23$\\	 
 $x^{\mathrm{ref}}$ && Ref. battery SoC  & $[0.2, 0.85]$ \\
 $c$ &[\euro /kWh] & Energy cost & $1.09\times10^{-3}$ \\
 $\bar r$ &[\euro /kWh] & Constant reward & $1.23\times10^{-3}$ \\
 $\rho^+\, (\rho^-)$& [\euro/kWh$^{2}$] & Degradation cost & $1(0.5)\times 10^{-3}$  \\
 $\overline{d}$ & [kWh] & Grid power capacity & $45$\\
 $ h $ & & Min. consecutive v2g slots & $5$\\
 $\overline{v}$ && Max. connected \glspl{PEV} & $5$ \\
 \hline
\end{tabular}
\end{center}
\end{table} 

For a subset of \glspl{PEV}, we show in Fig.~\ref{fig:x} that the \gls{SoC} $x_i(t)$, as well as the scheduling variable $\delta_{i}(t)$, satisfy the constraints on the required \gls{SoC} $x_i^{\mathrm{ref}}(t)$ \eqref{eq:constraint_desiredlevel} and on the number $h_i$ of plugged-in consecutive intervals \eqref{eq:logic_consecutiveintervals}.
\begin{figure*}[t]
    \centering
    \begin{subfigure}[b]{0.32\textwidth}
        \includegraphics[width=\textwidth]{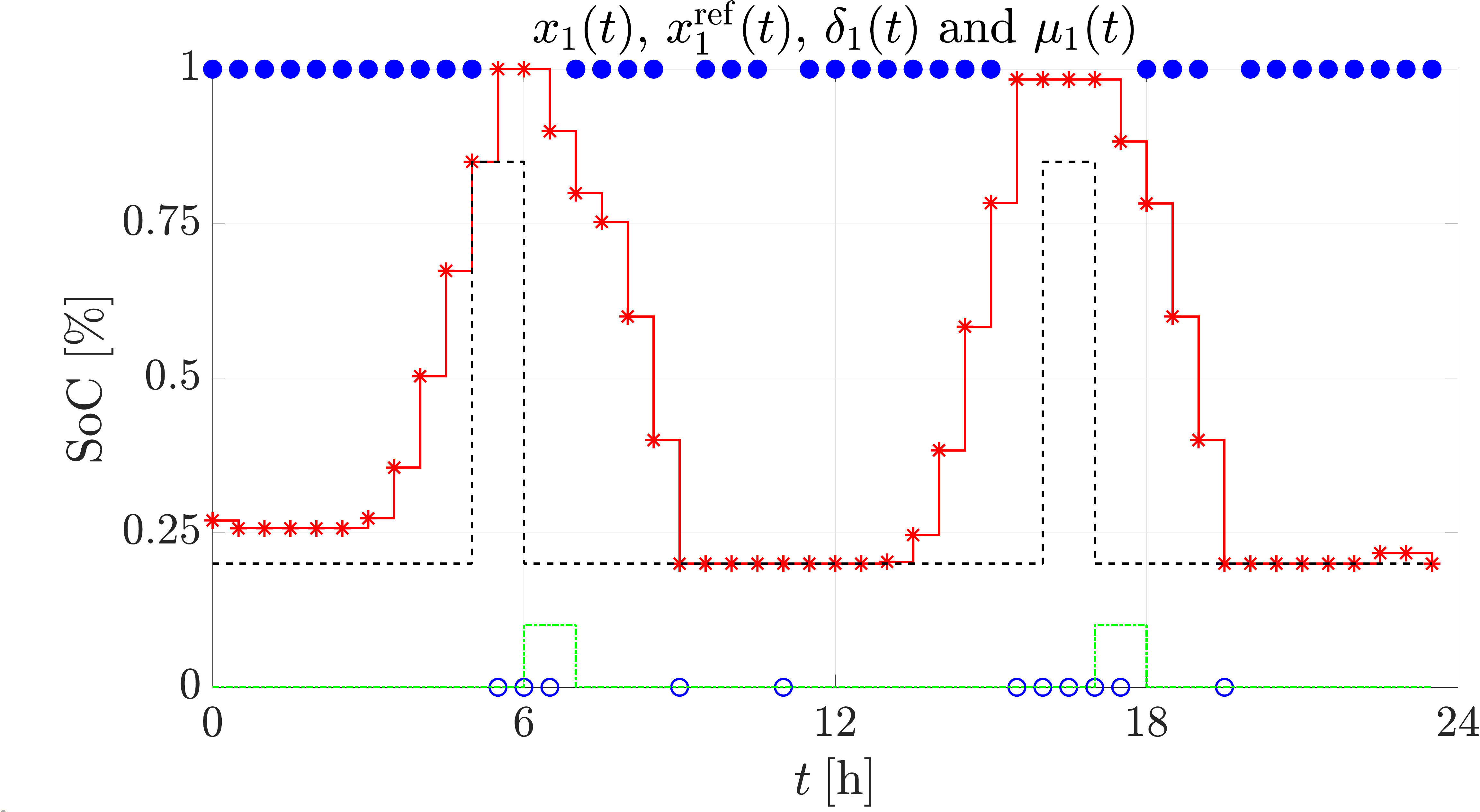}
        \caption{}
        \label{fig:x_1}
    \end{subfigure}
    ~ 
    \begin{subfigure}[b]{0.32\textwidth}
        \includegraphics[width=\textwidth]{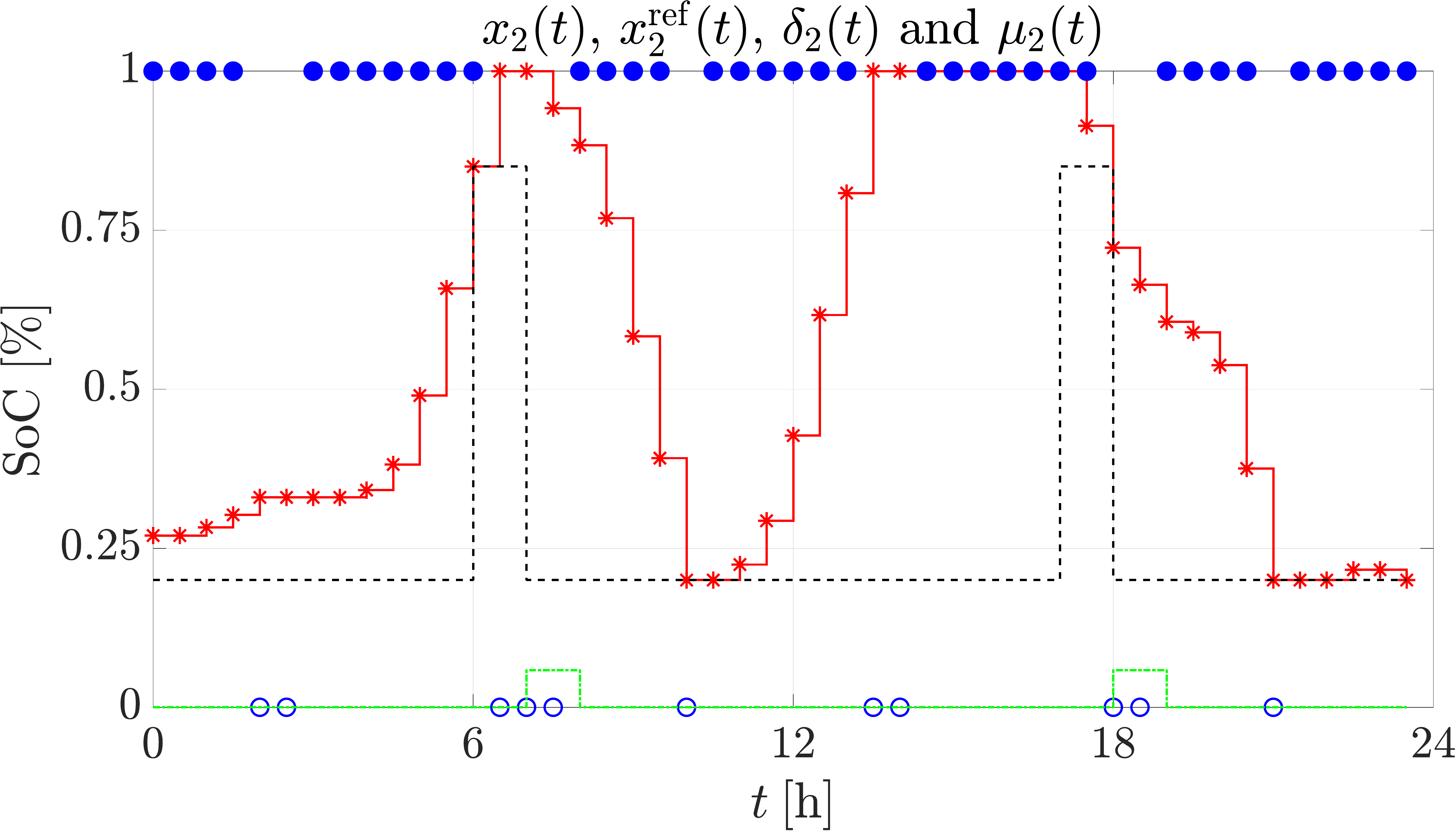}
        \caption{}
        \label{fig:x_2}
    \end{subfigure}
    \begin{subfigure}[b]{0.32\textwidth}
        \includegraphics[width=\textwidth]{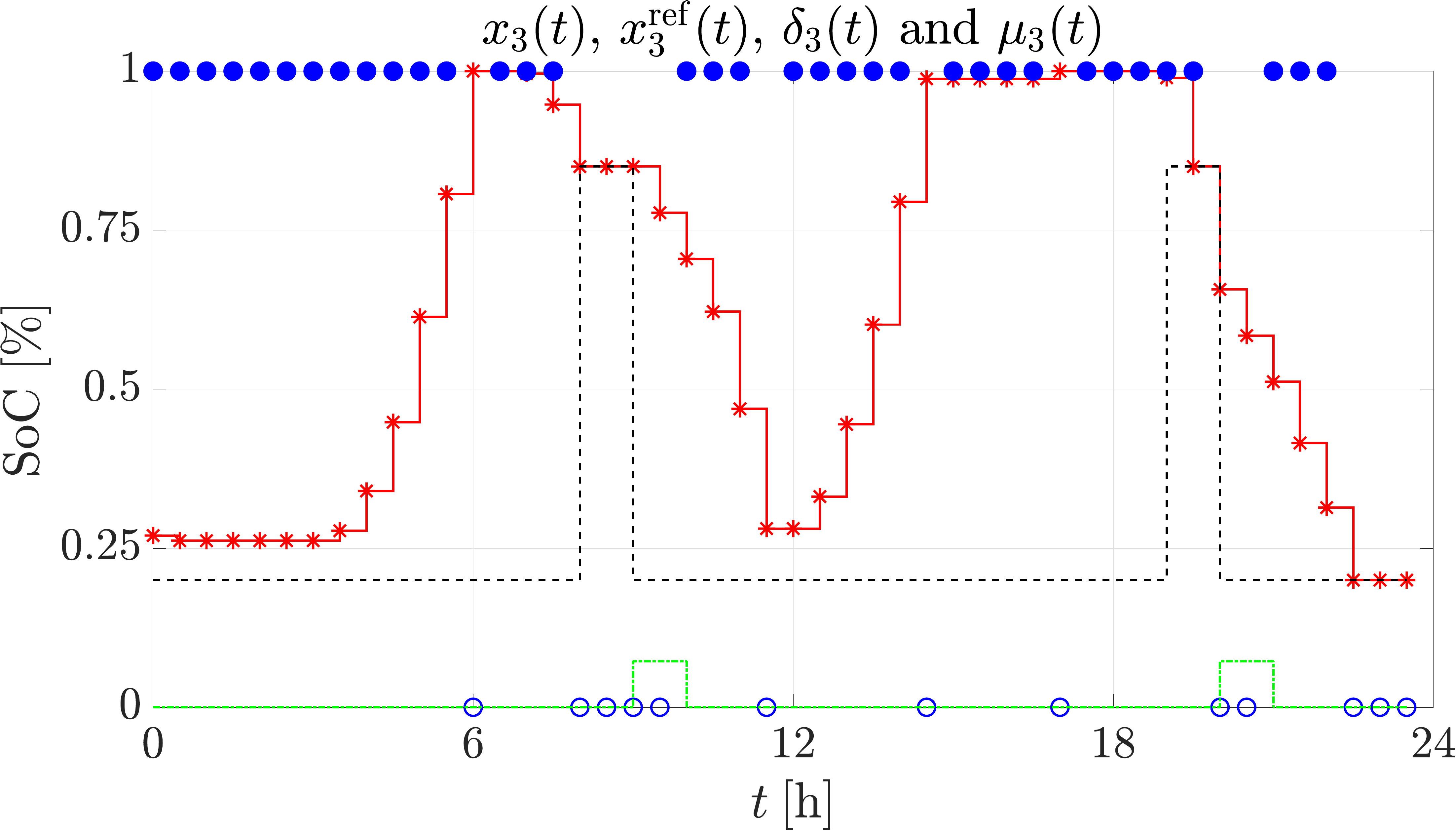}
        \caption{}
        \label{fig:x_3}
    \end{subfigure}\\
        \begin{subfigure}[b]{0.32\textwidth}
        \includegraphics[width=\textwidth]{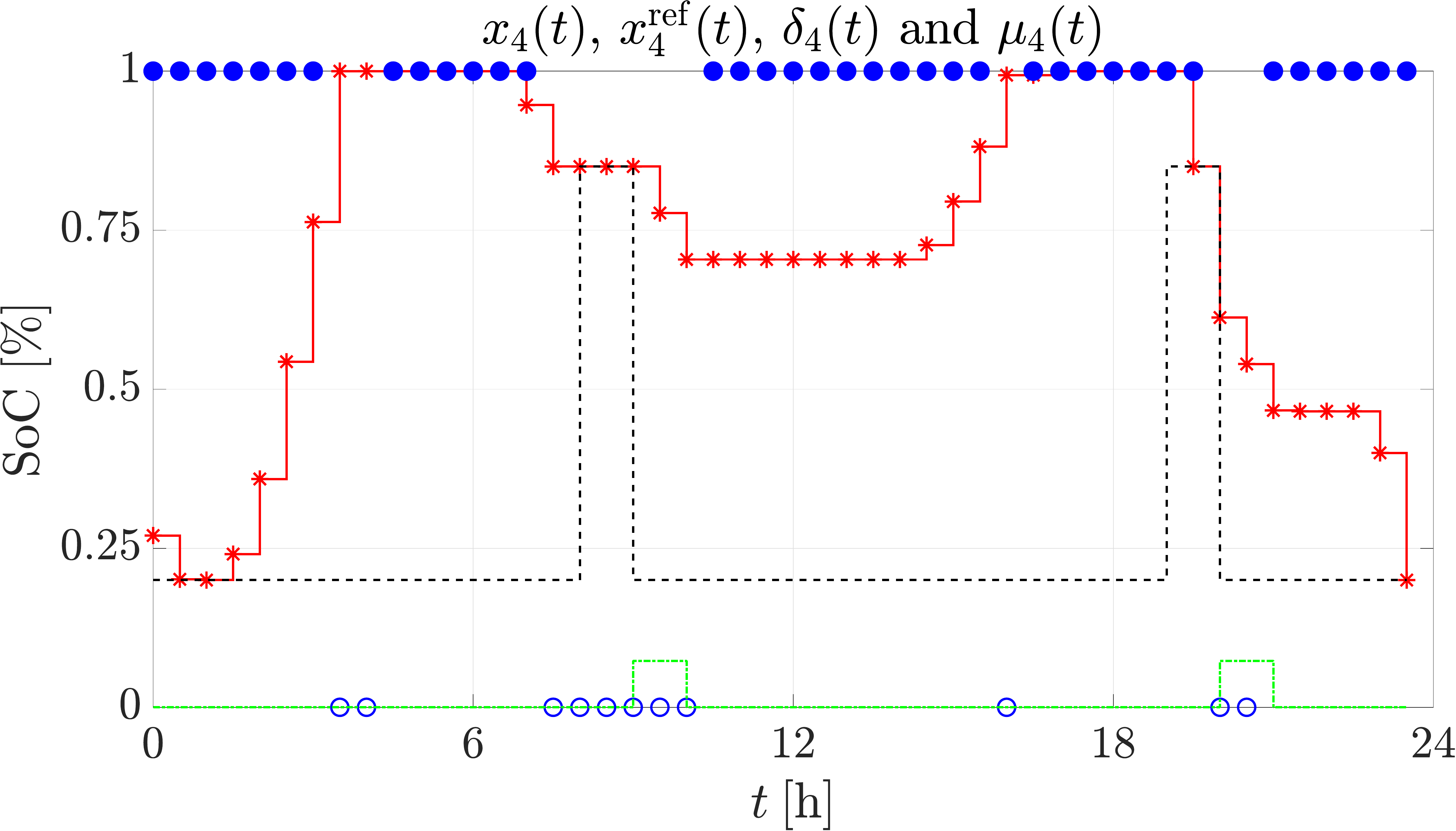}
        \caption{}
        \label{fig:x_4}
    \end{subfigure}
    ~ 
    \begin{subfigure}[b]{0.32\textwidth}
        \includegraphics[width=\textwidth]{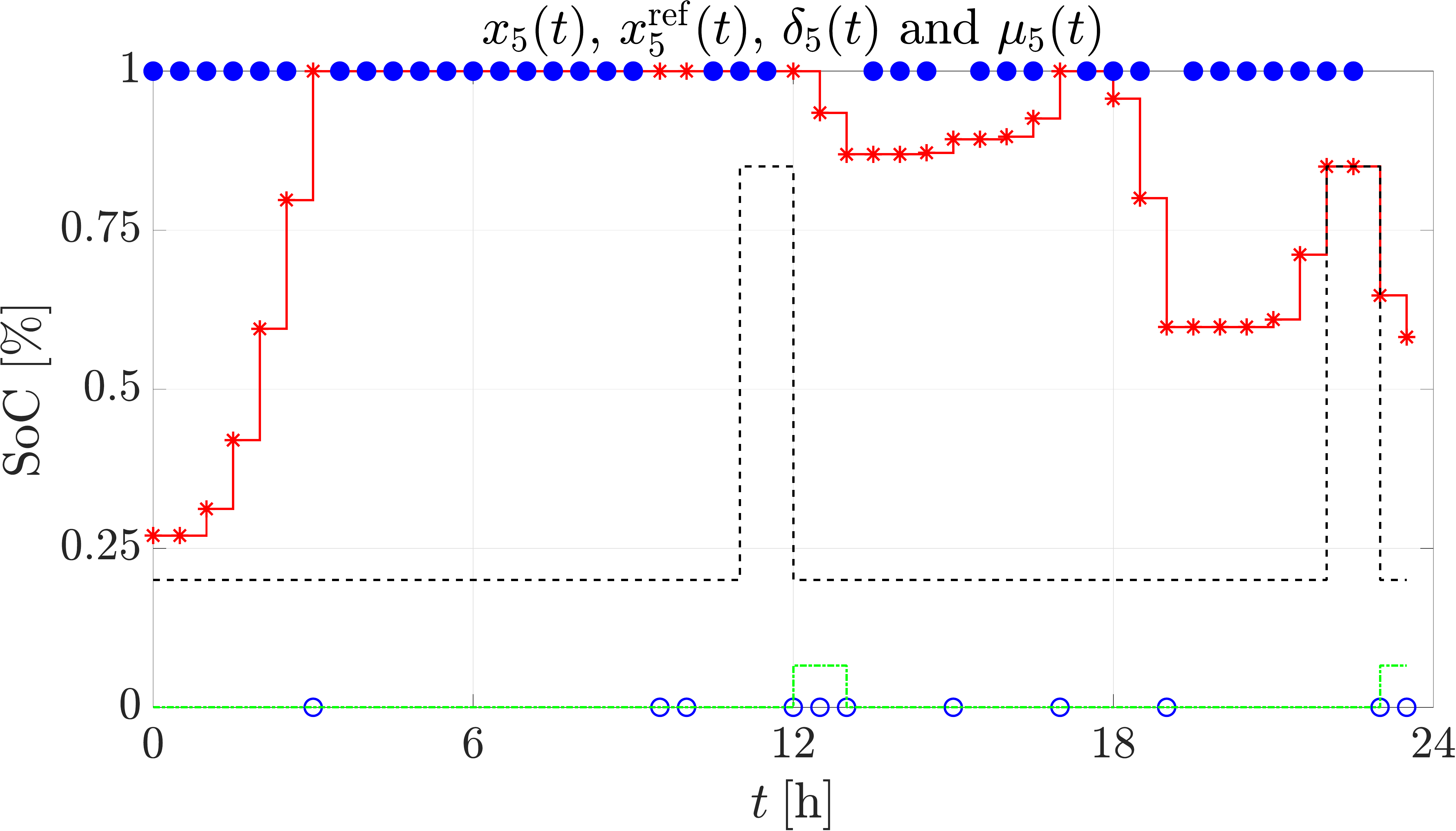}
        \caption{}
        \label{fig:x_5}
    \end{subfigure}
    \begin{subfigure}[b]{0.32\textwidth}
        \includegraphics[width=\textwidth]{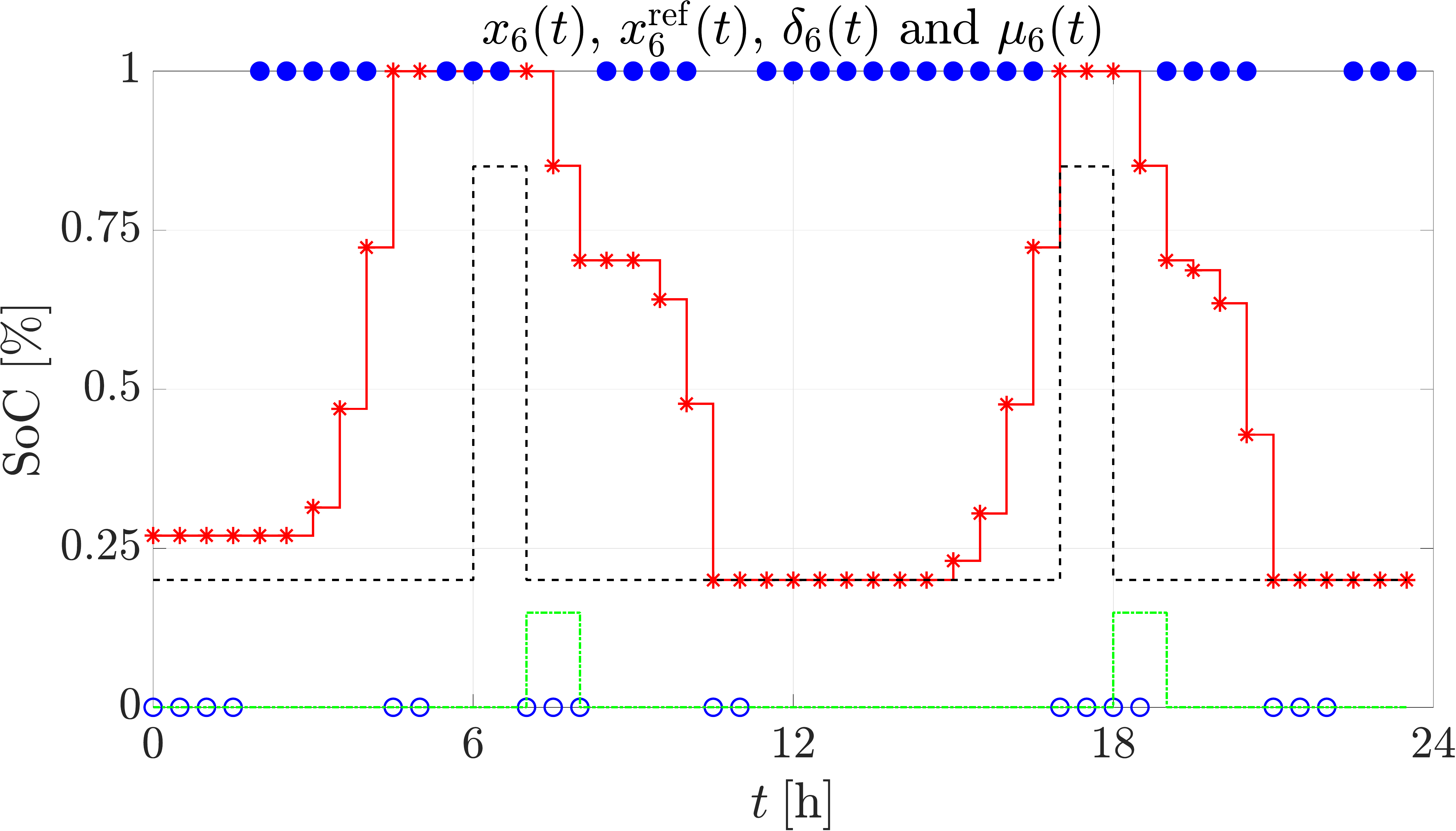}
        \caption{}
        \label{fig:x_6}
    \end{subfigure}
    \caption{For each \gls{PEV} $i\in \mc I$, the SoC $x_i(t)$ (solid red line), the desired lower bound of the SoC $x_i^{\mathrm{ref}}(t)$ (dashed black line), the value of the binary sheduling variable $\delta_i(t)$ (empty and filled blue circles) and $\mu_i(t)$ (dashed green line), over the horizon $\mc{T}$.}\label{fig:x}
\end{figure*}
In Fig.~\ref{fig:a_and_dt}, note that the peak value of  $d(t)$ exceeds the maximum capacity $\overline d$ at $t=$ 19 h. Consequently, the constraint \eqref{eq:constraint_totalamount} forces some of the vehicles to discharge during this time period, showing the  so-called ``valley filling'' phenomenon.
\begin{figure}[tb!]
	\centering
	\includegraphics[width=.8\columnwidth]{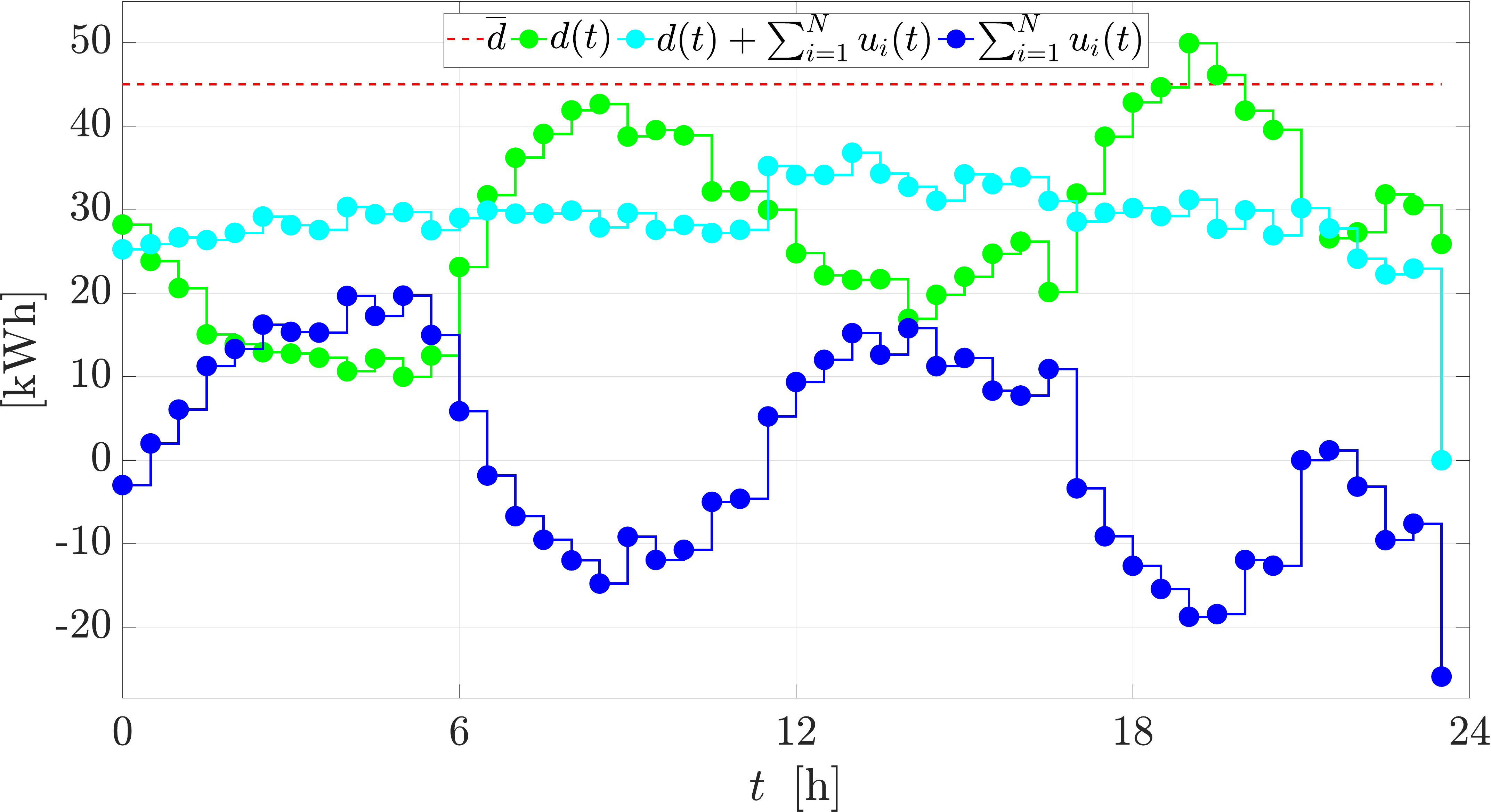}
	\caption{Power required by the load in the grid $d(t)$ (green), the one exchanged with the fleet, $\sum_{i=1}^N u_i(t)$ (blue) and the maximum capacity of the charging station, $\overline d$ (red dashed). }
	\label{fig:a_and_dt}
\end{figure}
Finally, Fig.~\ref{fig:pot_fun} shows the value of the potential function $P(\bld z)$ and the convergence of the algorithm after approximately $25$ iterations.

\begin{figure}[tb!]
	\centering
	\includegraphics[width=.8\columnwidth, trim= 0 0 0 0,clip]{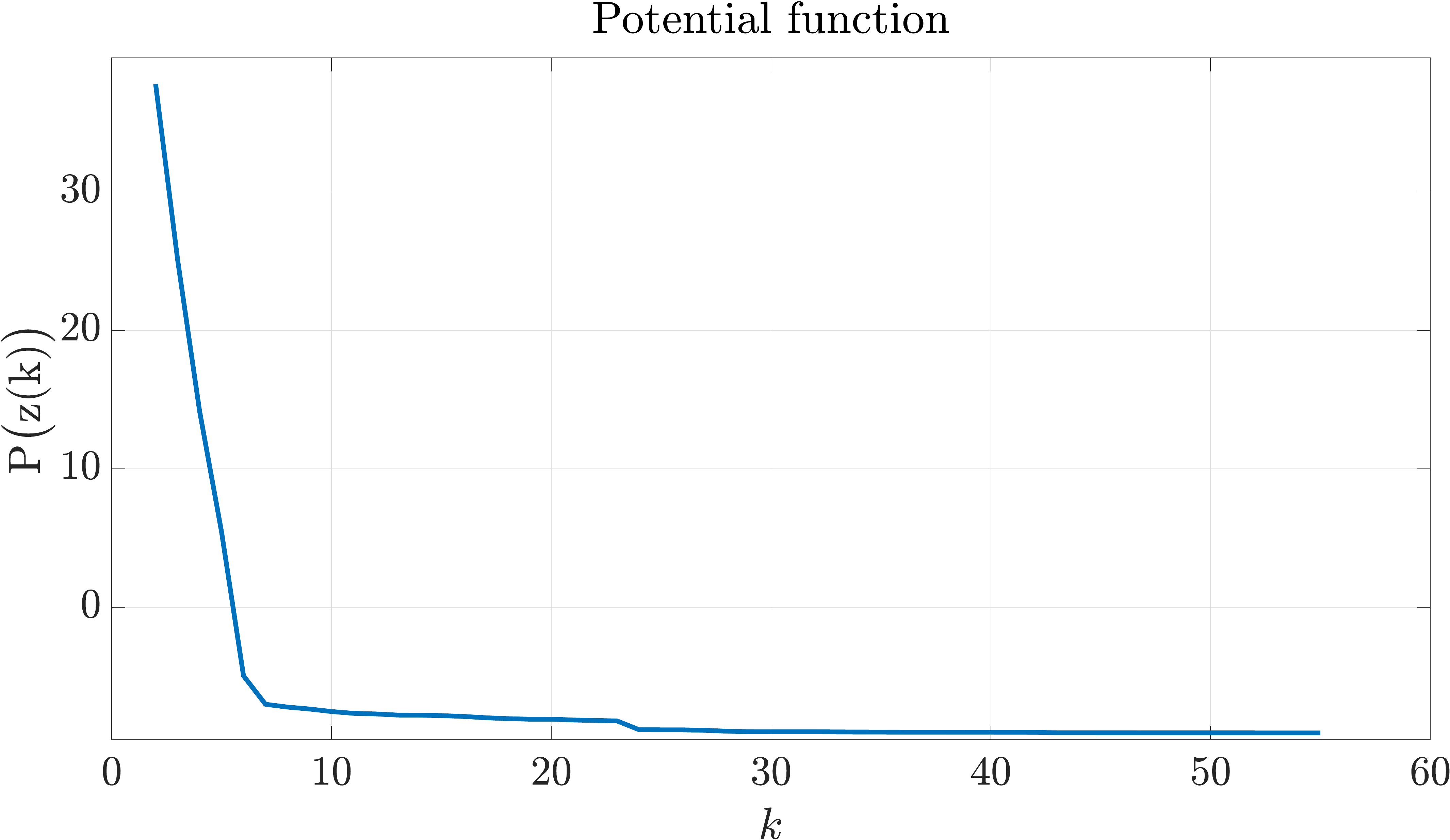}
	\caption{The values of the potential function $P(\bld z)$ associated to the updating \gls{PEV} in Algorithm~\ref{alg:Alg_GS}.}
	\label{fig:pot_fun}
\end{figure}

\section{Conclusion and outlook}
\label{sec:conclusion}
A mixed-integer aggregative game can efficiently solve the charge scheduling coordination of a fleet of \glspl{PEV}, each one modeled as a system of mixed-logical-dynamical systems. Discrete decision variables, as well as logical propositions, allow to catch the different interests of each \gls{PEV} and the heterogeneous interactions between both the set of vehicles itself and the charging station. By playing the role of central aggregator, the charging station is key in computing an (approximated) equilibrium of the associated generalized potential game in a semi-decentralized fashion. In fact, the specific sequence of activation of the \glspl{PEV} highly influences both the outcome of the game and the speed of convergence.
The analysis of this phenomenon is left to future works, as well as the generalization in the case of multiple charging stations, i.e., possible multiple central aggregators.

\balance
\bibliographystyle{IEEEtran}
\bibliography{19_CDC_PEV_MC}

\begin{thebibliography}{10}
\providecommand{\url}[1]{#1}
\csname url@samestyle\endcsname
\providecommand{\newblock}{\relax}
\providecommand{\bibinfo}[2]{#2}
\providecommand{\BIBentrySTDinterwordspacing}{\spaceskip=0pt\relax}
\providecommand{\BIBentryALTinterwordstretchfactor}{4}
\providecommand{\BIBentryALTinterwordspacing}{\spaceskip=\fontdimen2\font plus
\BIBentryALTinterwordstretchfactor\fontdimen3\font minus
  \fontdimen4\font\relax}
\providecommand{\BIBforeignlanguage}[2]{{%
\expandafter\ifx\csname l@#1\endcsname\relax
\typeout{** WARNING: IEEEtran.bst: No hyphenation pattern has been}%
\typeout{** loaded for the language `#1'. Using the pattern for}%
\typeout{** the default language instead.}%
\else
\language=\csname l@#1\endcsname
\fi
#2}}
\providecommand{\BIBdecl}{\relax}
\BIBdecl

\bibitem{Review_2016}
J.~Hu, H.~Morais, T.~Sousa, and M.~Lind, ``Electric vehicle fleet management in
  smart grids: A review of services, optimization and control aspects,''
  \emph{Renewable and Sustainable Energy Reviews}, vol.~56, pp. 1207--1226,
  2016.

\bibitem{doi:10.1002/etep.565}
R.~J. Bessa and M.~A. Matos, ``Economic and technical management of an
  aggregation agent for electric vehicles: A literature survey,''
  \emph{European Transactions on Electrical Power}, vol.~22, no.~3, pp.
  334--350, 2012.

\bibitem{5356176}
K.~{Clement-Nyns}, E.~{Haesen}, and J.~{Driesen}, ``The impact of charging
  plug-in hybrid electric vehicles on a residential distribution grid,''
  \emph{IEEE Transactions on Power Systems}, vol.~25, no.~1, pp. 371--380,
  2010.

\bibitem{6021358}
E.~{Sortomme} and M.~A. {El-Sharkawi}, ``Optimal scheduling of vehicle-to-grid
  energy and ancillary services,'' \emph{IEEE Transactions on Smart Grid},
  vol.~3, no.~1, pp. 351--359, 2012.

\bibitem{J_Hiskens_2013}
Z.~{Ma}, D.~S. {Callaway}, and I.~A. {Hiskens}, ``Decentralized charging
  control of large populations of plug-in electric vehicles,'' \emph{IEEE
  Transactions on Control Systems Technology}, vol.~21, no.~1, pp. 67--78,
  2013.

\bibitem{C_Parise_2014}
F.~{Parise}, M.~{Colombino}, S.~{Grammatico}, and J.~{Lygeros}, ``Mean field
  constrained charging policy for large populations of plug-in electric
  vehicles,'' in \emph{53rd IEEE Conference on Decision and Control}, 2014, pp.
  5101--5106.

\bibitem{Gan_2013}
L.~{Gan}, U.~{Topcu}, and S.~H. {Low}, ``Optimal decentralized protocol for
  electric vehicle charging,'' \emph{IEEE Transactions on Power Systems},
  vol.~28, no.~2, pp. 940--951, 2013.

\bibitem{ma2016efficient}
Z.~Ma, S.~Zou, L.~Ran, X.~Shi, and I.~A. Hiskens, ``Efficient decentralized
  coordination of large-scale plug-in electric vehicle charging,''
  \emph{Automatica}, vol.~69, pp. 35--47, 2016.

\bibitem{Grammatico_2016}
S.~{Grammatico}, ``Exponentially convergent decentralized charging control for
  large populations of plug-in electric vehicles,'' in \emph{2016 IEEE 55th
  Conference on Decision and Control (CDC)}, 2016, pp. 5775--5780.

\bibitem{Zou_2017}
S.~{Zou}, Z.~{Ma}, X.~{Liu}, and I.~{Hiskens}, ``An efficient game for
  coordinating electric vehicle charging,'' \emph{IEEE Transactions on
  Automatic Control}, vol.~62, no.~5, pp. 2374--2389, 2017.

\bibitem{J_You_2016}
P.~{You}, Z.~{Yang}, M.~{Chow}, and Y.~{Sun}, ``Optimal cooperative charging
  strategy for a smart charging station of electric vehicles,'' \emph{IEEE
  Transactions on Power Systems}, vol.~31, no.~4, pp. 2946--2956, 2016.

\bibitem{J_Xing_2016}
H.~{Xing}, M.~{Fu}, Z.~{Lin}, and Y.~{Mou}, ``Decentralized optimal scheduling
  for charging and discharging of plug-in electric vehicles in smart grids,''
  \emph{IEEE Transactions on Power Systems}, vol.~31, no.~5, pp. 4118--4127,
  2016.

\bibitem{J_Rivera_2017}
J.~{Rivera}, C.~{Goebel}, and H.~{Jacobsen}, ``Distributed convex optimization
  for electric vehicle aggregators,'' \emph{IEEE Transactions on Smart Grid},
  vol.~8, no.~4, pp. 1852--1863, 2017.

\bibitem{facchinei2011decomposition}
F.~Facchinei, V.~Piccialli, and M.~Sciandrone, ``Decomposition algorithms for
  generalized potential games,'' \emph{Computational Optimization and
  Applications}, vol.~50, no.~2, pp. 237--262, 2011.

\bibitem{Millner:2010:Battery_degradation}
A.~{Millner}, ``Modeling lithium ion battery degradation in electric
  vehicles,'' in \emph{2010 IEEE Conference on Innovative Technologies for an
  Efficient and Reliable Electricity Supply}, 2010, pp. 349--356.

\bibitem{fabiani2018mld}
F.~Fabiani and S.~Grammatico, ``A mixed-logical-dynamical model for automated
  driving on highways,'' in \emph{2018 IEEE Conference on Decision and Control
  (CDC)}.\hskip 1em plus 0.5em minus 0.4em\relax IEEE, 2018, pp. 1011--1015.

\bibitem{fabiani2018mvad}
------, ``Multi-vehicle automated driving as a generalized mixed-integer
  potential game,'' \emph{IEEE Transactions on Intelligent Transportation
  Systems}, 2019, (In press).

\bibitem{bemporad1999control}
A.~Bemporad and M.~Morari, ``Control of systems integrating logic, dynamics,
  and constraints,'' \emph{Automatica}, vol.~35, no.~3, pp. 407--427, 1999.

\bibitem{williams2013model}
H.~P. Williams, \emph{Model building in {M}athematical {P}rogramming}.\hskip
  1em plus 0.5em minus 0.4em\relax John Wiley \& Sons, 2013.

\bibitem{facchinei2007generalized}
F.~Facchinei and C.~Kanzow, ``Generalized {N}ash equilibrium problems,''
  \emph{4OR: A Quarterly Journal of Operations Research}, vol.~5, no.~3, pp.
  173--210, 2007.

\bibitem{fabiani2019nash}
F.~Fabiani and A.~Caiti, ``Nash equilibrium seeking in potential games with
  double-integrator agents,'' in \emph{2019 18th European Control Conference
  (ECC)}.\hskip 1em plus 0.5em minus 0.4em\relax IEEE, 2019, pp. 548--553.

\bibitem{sagratella2017algorithms}
S.~Sagratella, ``Algorithms for generalized potential games with mixed-integer
  variables,'' \emph{Computational Optimization and Applications}, pp. 1--29,
  2017.

\bibitem{Melle}
\BIBentryALTinterwordspacing
M.~de~Vent, ``Distributed optimal control of smart electricity grids: Home
  battery and electric vehicle implementation,'' 2018. [Online]. Available:
  \url{http://fse.studenttheses.ub.rug.nl/18698/}
\BIBentrySTDinterwordspacing

\end{thebibliography}

\end{document}